\providecommand{\U}[1]{\protect\rule{.1in}{.1in}}
\providecommand{\U}[1]{\protect\rule{.1in}{.1in}}
\providecommand{\U}[1]{\protect\rule{.1in}{.1in}}
\newtheorem{theorem}{Theorem}
\newtheorem{algorithm}[theorem]{Algorithm}
\numberwithin{equation}{section}
\numberwithin{theorem}{section}
\newtheorem{definition}[theorem]{Definition}
\newtheorem{lemma}[theorem]{Lemma}
\newtheorem{remark}[theorem]{Remark}
\newenvironment{proof}[1][Proof]{\textbf{#1.} }{\ \rule{0.5em}{0.5em}}
\begin{document}

\title{\textbf{The Split Common Null Point Problem}}
\author{Charles Byrne$^{1}$, Yair Censor$^{2}$, Aviv Gibali$^{3}$ and Simeon
Reich$^{3}$\bigskip\\$^{1}$Department of Mathematical Sciences\\University of Massachusetts at Lowell\\Lowell, MA 01854, USA\\(Charles\_Byrne@uml.edu)\bigskip\\$^{2}$Department of Mathematics, University of Haifa\\Mt.\ Carmel, 31905 Haifa, Israel\\(yair@math.haifa.ac.il)\bigskip\\$^{3}$Department of Mathematics\\The Technion - Israel Institute of Technology\\32000 Haifa, Israel\\(avivg@techunix.technion.ac.il, sreich@techunix.technion.ac.il)\bigskip}
\date{April 15, 2012}
\maketitle

\begin{abstract}
We introduce and study the Split Common Null Point Problem (SCNPP) for
set-valued maximal monotone mappings in Hilbert spaces. This problem
generalizes our Split Variational Inequality Problem (SVIP) [Y. Censor, A.
Gibali and S. Reich, Algorithms for the split variational inequality problem,
Numerical Algorithms 59 (2012), 301--323]. The SCNPP with only two set-valued
mappings entails finding a zero of a maximal monotone mapping in one space,
the image of which under a given bounded linear transformation is a zero of
another maximal monotone mapping. We present four iterative algorithms that
solve such problems in Hilbert spaces, and establish weak convergence for one
and strong convergence for the other three.

\end{abstract}


\section{Introduction}

In this paper we introduce and study the \textit{Split Common Null Point
Problem} for set-valued mappings in Hilbert spaces. Let $\mathcal{H}_{1}$ and
$\mathcal{H}_{2}$ be two real Hilbert spaces. Given set-valued mappings
$B_{i}:\mathcal{H}_{1}\rightarrow2^{\mathcal{H}_{1}}$, $1\leq i\leq p$, and
$F_{j}:\mathcal{H}_{2}\rightarrow2^{\mathcal{H}_{2}}$, $1\leq j\leq r,$
respectively, and bounded linear operators $A_{j}:\mathcal{H}_{1}%
\rightarrow\mathcal{H}_{2}$, $1\leq j\leq r$, the problem is formulated as
follows:%
\begin{gather}
\text{find a point }x^{\ast}\in\mathcal{H}_{1}\text{ such that }0\in\cap
_{i=1}^{p}B_{i}(x^{\ast})\label{eq:mszp-1}\\
\text{and such that the points}\nonumber\\
y_{j}^{\ast}=A_{j}\left(  x^{\ast}\right)  \in\mathcal{H}_{2}\text{ solve
}0\in\cap_{j=1}^{r}F_{j}(y_{j}^{\ast}). \label{eq:mszp-2}%
\end{gather}
We denote this problem by SCNPP$(p,r)$ to emphasize the multiplicity of
mappings. To motivate this new problem and to understand its relationship with
other problems, we first look at the prototypical \textit{Split Inverse
Problem}\textbf{ }formulated in \cite[Section 2]{cgr}. It concerns a model in
which there are given two vector spaces $X$ and $Y$ and a linear operator
$A:X\rightarrow Y.$ In addition, two inverse problems are involved. The first
one, denoted by IP$_{1},$ is formulated in the space $X$ and the second one,
denoted by IP$_{2}$, is formulated in the space $Y.$ Given these data, the
\textit{Split Inverse Problem} (SIP) is formulated as follows:%
\begin{gather}
\text{find a point }x^{\ast}\in X\text{ that solves IP}_{1}\text{ }\\
\text{and such that}\nonumber\\
\text{the point }y^{\ast}=A\left(  x^{\ast}\right)  \in Y\text{ solves IP}%
_{2}\text{.}%
\end{gather}

Real-world inverse problems can be cast into this framework by making
different choices of the spaces $X$ and $Y$ (including the case $X=Y$), and by
choosing appropriate inverse problems for IP$_{1}$ and IP$_{2}$. The
\textit{Split Convex Feasibility Problem} (SCFP) \cite{CE} is the first
instance of an SIP.\ The two problems IP$_{1}$ and IP$_{2}$ there are of the
\textit{Convex Feasibility Problem} (CFP) type. This formulation was used for
solving an inverse problem in radiation therapy treatment planning
\cite{CEKB,CBMT}. The SCFP has been well studied during the last two decades
both theoretically and practically; see, e.g., \cite{byrne02, CEKB} and the
references therein. Two leading candidates for IP$_{1}$ and IP$_{2}$ are the
mathematical models of the CFP and problems of constrained optimization. In
particular, the CFP formalism is in itself at the core of the modeling of many
inverse problems in various areas of mathematics and the physical sciences;
see, e.g., \cite{cap88} and references therein for an early example. Over the
past four decades, the CFP has been used to model significant real-world
inverse problems in sensor networks, radiation therapy treatment planning,
resolution enhancement and in many other areas; see \cite{cccdh11} for exact
references to all of the above. More work on the CFP can be found in
\cite{bb96,Byrne, Byrne04, cdh10}.

It is therefore natural to ask whether other inverse problems can be used for
IP$_{1}$ and IP$_{2}$, besides the CFP, and be embedded in the SIP
methodology. For example, can IP$_{1}$ = CFP in the space $X$ and can a
constrained optimization problem be IP$_{2}$ in the space $Y$? In our recent
paper \cite{cgr}\textit{ }we have made a step in this direction by formulating
an SIP with a \textit{Variational Inequality Problem} (VIP) in each of the two
spaces of the SIP, reaching a \textit{Split Variational Inequality Problem}
(SVIP). In the present paper we study an SIP with a \textit{Null Point
Problem} in each of the two spaces. As we explain below, this formulation
includes the earlier formulation with VIPs and all its special cases such as
the CFP and constrained optimization problems.

\subsection{Relations with previous work and the contribution of the present
paper}

To further motivate our study, let us look at the various problem formulations
from the point of view of their structure only, without reference to the
various assumptions made in order to prove results regarding these problems.
We put the SCNPP$(p,r)$ in the context of other SIPs and related works. First
recall the \textit{Split Variational Inequality Problem} (SVIP), which is an
SIP with a VIP in each one of the two spaces \cite{cgr}. Let $\mathcal{H}_{1}$
and $\mathcal{H}_{2}$ be two real Hilbert spaces, and assume that there are
given two operators $f:\mathcal{H}_{1}\rightarrow\mathcal{H}_{1}$ and
$g:\mathcal{H}_{2}\rightarrow\mathcal{H}_{2},$ a bounded linear operator
$A:\mathcal{H}_{1}\rightarrow\mathcal{H}_{2}$, and nonempty, closed and convex
subsets $C\subset\mathcal{H}_{1}$ and $Q\subset\mathcal{H}_{2}.$ The SVIP is
then formulated as follows:%
\begin{gather}
\text{find a point }x^{\ast}\in C\text{ such that }\left\langle f(x^{\ast
}),x-x^{\ast}\right\rangle \geq0\text{ for all }x\in C\label{eq:vip}\\
\text{and such that}\nonumber\\
\text{the point }y^{\ast}=A\left(  x^{\ast}\right)  \in Q\text{ and solves
}\left\langle g(y^{\ast}),y-y^{\ast}\right\rangle \geq0\text{ for all }y\in Q.
\label{eq:svip}%
\end{gather}
This can be structurally considered a special case of SCNPP$(1,1).$ Denoting
by \textrm{SOL}$\left(  f,C\right)  $ and \textrm{SOL}$\left(  g,Q\right)  $
the solution sets of the VIPs in (\ref{eq:vip}) and (\ref{eq:svip}),
respectively, we can also write the SVIP in the following way:%
\begin{equation}
\text{find a point }x^{\ast}\in\mathrm{SOL}\left(  f,C\right)  \text{ such
that }A\left(  x^{\ast}\right)  \in\mathrm{SOL}\left(  g,Q\right)  .
\end{equation}
Taking in (\ref{eq:vip})--(\ref{eq:svip}) $C=\mathcal{H}_{1}$, $Q=\mathcal{H}%
_{2},$ and choosing $x:=x^{\ast}-f(x^{\ast})\in\mathcal{H}_{1}$ in
(\ref{eq:vip}) and $y=A\left(  x^{\ast}\right)  -g(A\left(  x^{\ast}\right)
)\in\mathcal{H}_{2}$ in (\ref{eq:svip}), we obtain the \textit{Split Zeros
Problem} (SZP) for two operators $f:\mathcal{H}_{1}\rightarrow\mathcal{H}_{1}$
and $g:\mathcal{H}_{2}\rightarrow\mathcal{H}_{2}$, which we introduced in
\cite[Subsection 7.3]{cgr}. It is formulated as follows:%
\begin{equation}
\text{find a point }x^{\ast}\in\mathcal{H}_{1}\text{ such that }f(x^{\ast
})=0\text{ and }g(A\left(  x^{\ast}\right)  )=0. \label{eq:szp}%
\end{equation}
An important observation that should be made at this point is that if we
denote by $N_{C}\left(  v\right)  $ the \textit{normal cone} of some nonempty,
closed and convex set $C$ at a point $v\in C$, i.e.,%
\begin{equation}
N_{C}\left(  v\right)  :=\{d\in\mathcal{H}\mid\left\langle d,y-v\right\rangle
\leq0\text{ for all }y\in C\}, \label{eq:normal_cone}%
\end{equation}
and define the set-valued mapping $B$ by%
\begin{equation}
B(v):=\left\{
\begin{array}
[c]{cc}%
f(v)+N_{C}\left(  v\right)  , & v\in C,\\
\emptyset, & \text{otherwise,}%
\end{array}
\right.
\end{equation}
where $f$ is some given operator, then, under a certain continuity assumption
on $f$, Rockafellar in \cite[Theorem 3]{Rockafellar} showed that $B$ is a
maximal monotone mapping and $B^{-1}\left(  0\right)  =$ \textrm{SOL}$\left(
f,C\right)  $.

Following this idea, Moudafi \cite{Moudafi} introduced the \textit{Split
Monotone Variational Inclusion} (SMVI) which generalized the SVIP of
\cite{cgr}. Given two operators $f:\mathcal{H}_{1}\rightarrow\mathcal{H}_{1}$
and $g:\mathcal{H}_{2}\rightarrow\mathcal{H}_{2}$, a bounded linear operator
$A:\mathcal{H}_{1}\rightarrow\mathcal{H}_{2}$, and two set-valued mappings
$B_{1}:\mathcal{H}_{1}\rightarrow2^{\mathcal{H}_{1}}$ and $B_{2}%
:\mathcal{H}_{2}\rightarrow2^{\mathcal{H}_{2}}$, the SMVI is formulated as
follows:%
\begin{gather}
\text{find a point }x^{\ast}\in\mathcal{H}_{1}\text{ such that }0\in
f(x^{\ast})+B_{1}(x^{\ast})\label{eq:mvi-1}\\
\text{and such that the point}\nonumber\\
y^{\ast}=A\left(  x^{\ast}\right)  \in\mathcal{H}_{2}\text{ solves }0\in
g(y^{\ast})+B_{2}(y^{\ast})\text{.} \label{eq:mvi-2}%
\end{gather}

With the aid of simple substitutions it is clear that, structurally, SMVI is
identical with SCNPP$(1,1)$ (use only two set-valued mappings, i.e., $p=r=1,$
and put in (\ref{eq:mvi-1})--(\ref{eq:mvi-2}) above, $f=g=0$). The
applications presented in \cite{Moudafi} only deal with this situation.

Masad and Reich \cite{Masad+Reich} studied the \textit{Constrained
Multiple-Set Split Convex Feasibility Problem }(CMSSCFP). Let $r$ and $p$ be
two natural numbers. Let $C_{i},$ $1\leq i\leq p,$ and $Q_{j},$ $1\leq j\leq
r,$ be closed and convex subsets of $\mathcal{H}_{1}$ and $\mathcal{H}_{2},$
respectively; further, for each $1\leq j\leq r,$ let $A_{j}:\mathcal{H}%
_{1}\rightarrow\mathcal{H}_{2}$ be a bounded linear operator. Finally, let
$\Omega$ be another closed and convex subset of $\mathcal{H}_{1}$. The CMSSCFP
is formulated as follows:%
\begin{gather}
\text{find a point }x^{\ast}\in\Omega\label{eq:CMSSCFP-1}\\
\text{such that}\nonumber\\
x^{\ast}\in\cap_{i=1}^{p}C_{i}\text{ and }A_{j}\left(  x^{\ast}\right)  \in
Q_{j}\text{ for each }j=1,2,\ldots,r. \label{eq:CMSSCFP-2}%
\end{gather}

This is also structurally a special case of SCNPP$(p,r).$ Another related
split problem is the \textit{Split Common Fixed Point Problem} (SCFPP), first
introduced in Euclidean spaces in \cite{CS09} and later studied by Moudafi
\cite{Moudafi10} in Hilbert spaces. Given operators $U_{i}:\mathcal{H}%
_{1}\rightarrow\mathcal{H}_{1}$, $i=1,2,\ldots,p,$ and $T_{j}:\mathcal{H}%
_{2}\rightarrow\mathcal{H}_{2},$ $j=1,2,\ldots,r,$ with nonempty fixed points
sets $C_{i},$ $i=1,2,\ldots,p,$ and $Q_{j},$ $j=1,2,\ldots,r,$ respectively,
and a bounded linear operator $A:\mathcal{H}_{1}\rightarrow\mathcal{H}_{2},$
the SCFPP is formulated as follows:%
\begin{equation}
\text{find a point }x^{\ast}\in C:=\cap_{i=1}^{p}C_{i}\text{ such that
}A\left(  x^{\ast}\right)  \in Q:=\cap_{j=1}^{r}Q_{j}. \label{eq:1.15}%
\end{equation}
This is also structurally a special case of SCNPP$(p,r).$

Motivated by the CMSSCFP of \cite{Masad+Reich}, see (\ref{eq:CMSSCFP-1}%
)--(\ref{eq:CMSSCFP-2}) above, the purpose of the present paper is to
introduce the SCNPP$(p,r)$ and present algorithms for solving it. Following
\cite{Masad+Reich}, \cite{halpern} and \cite{Haugazeau68}, we are able to
establish strong convergence of three of the algorithms that we propose. These
strongly convergent algorithms can be easily adapted to the SMVI and to other
special cases of the SCNPP$(p,r)$.

Our paper is organized as follows. In Section \ref{Sec: Pre} we list several
known facts regarding operators and set-valued mappings that are needed in the
sequel. In Section \ref{Sec: Weak} we present an algorithm for solving the
SCNPP$(p,r)$ and obtain its weak convergence. In Section \ref{Sec: Str} we
propose three additional algorithms for solving the SCNPP$(p,r)$ and present
strong convergence theorems for them. Some further comments are presented in
Section \ref{sec:comments}.

\section{Preliminaries\label{Sec: Pre}}

Let $\mathcal{H}$ be a real Hilbert space with inner product $\langle
\cdot,\cdot\rangle$ and induced norm $\Vert\cdot\Vert,$ and let $D\subset
\mathcal{H}$ be a nonempty, closed and convex subset of it. We write either
$x^{k}\rightharpoonup x$ or $x^{k}\rightarrow x$ to indicate that the sequence
$\left\{  x^{k}\right\}  _{k=0}^{\infty}$ converges either weakly or strongly,
respectively, to $x$. Next we present several properties of operators and
set-valued mappings which will be useful later on. For more details on many of
the notions and results quoted here see, e.g., the recent books
\cite{BC-book-2011,burachik-book-2008}.

\begin{definition}
Let $\mathcal{H}$ be a real Hilbert space. Let $D\subset\mathcal{H}$ be a
subset of $\mathcal{H}$ and $h:D\rightarrow\mathcal{H}$ be an operator from
$D$ to $\mathcal{H}.$

\begin{enumerate}
\item $h$ is called $\nu$\texttt{-inverse strongly monotone} ($\nu$-ism) on
$D$ if there exists a number $\nu>0$ such that%
\begin{equation}
\langle h(x)-h(y),x-y\rangle\geq\nu\Vert h(x)-h(y)\Vert^{2}\text{ for all
}x,y\in D.
\end{equation}

\item $h$ is called \texttt{firmly nonexpansive} on $D$ if%
\begin{equation}
\left\langle h(x)-h(y),x-y\right\rangle \geq\left\Vert h(x)-h(y)\right\Vert
^{2}\text{\ for all }x,y\in D,
\end{equation}
i.e., if it is $1$-ism.

\item $h$ is called \texttt{Lipschitz continuous} with constant $\kappa>0$ on
$D$ if%
\begin{equation}
\Vert h(x)-h(y)\Vert\leq\kappa\Vert x-y\Vert\text{\ for all\ }x,y\in D.
\end{equation}

\item $h$ is called \texttt{nonexpansive} on $D$ if%
\begin{equation}
\left\Vert h(x)-h(y)\right\Vert \leq\left\Vert x-y\right\Vert \text{\ for all
}x,y\in D,
\end{equation}
i.e., if it is $1$-Lipschitz.

\item $h$ is called a \texttt{strict contraction} if it is Lipschitz
continuous with constant $\kappa<1$.

\item $h$ is called \texttt{hemicontinuous} if it is continuous along each
line segment in $D$.

\item $h$ is called \texttt{asymptotically regular} at $x\in D$
\cite{Browder+Petryshyn} if%
\begin{equation}
\lim_{k\rightarrow\infty}(h^{k}(x)-h^{k+1}(x))=0\text{ for all }%
x\in\mathcal{H},
\end{equation}
where $h^{k}$ denotes the k-th iterate of $h$.

\item $h$ is called \texttt{demiclosed} at $y\in\mathcal{H}$ if for any
sequence $\left\{  x^{k}\right\}  _{k=0}^{\infty}\subset D$ such that
$x^{k}\rightharpoonup\overline{x}\in D$ and $h(x^{k})\rightarrow y$, we have
$h(\overline{x})=y$.

\item $h$ is called \texttt{averaged} \cite{bbr} if there exists a
nonexpansive operator $N:D\rightarrow\mathcal{H}\ $and a number $c\in(0,1)$
such that%
\begin{equation}
h=(1-c)I+cN,
\end{equation}
where $I$ is the identity operator. In this case we also say that $h$ is
$c$-av \cite{Byrne04}.

\item $h$ is called \texttt{odd} if $D$ is symmetric, i.e., $D=-D$, and if%
\begin{equation}
h(-x)=-h(x)\text{ for all }x\in D.
\end{equation}

\end{enumerate}
\end{definition}

\begin{remark}
\label{remark:av}(i) It can be verified that if $h$ is $\nu$-ism, then it is
Lipschitz continuous with constant $\kappa=1/\nu$.

(ii) It is known that an operator $h$ is averaged if and only if its
complement $I-h$ is $\nu$-ism for some $\nu>1/2$; see, e.g., \cite[Lemma
2.1]{Byrne04}.

(iii) The operator $h$ is firmly nonexpansive if and only if its complement
$I-h$ is firmly nonexpansive. The operator $h$ is firmly nonexpansive if and
only if $h$ is $(1/2)$-av (see \cite[Proposition 11.2]{Goebel} and \cite[Lemma
2.3]{Byrne04}).

(iv) If $h_{1}$ and $h_{2}$ are $c_{1}$-av and $c_{2}$-av, respectively, then
their composition $S=h_{1}h_{2}$ is $(c_{1}+c_{2}-c_{1}c_{2})$-av. See
\cite[Lemma 2.2]{Byrne04}.
\end{remark}

\begin{definition}
Let $\mathcal{H}$ be a real Hilbert space. Let $B:\mathcal{H}\rightarrow
2^{\mathcal{H}}$ and $\lambda>0.$

(i) $B$ is called \texttt{odd} if%
\begin{equation}
B(-x)=-B(x)\text{ for all }x\in\mathcal{H}.
\end{equation}

(ii) $B$ is called a \texttt{maximal monotone mapping} if $B$ is
\texttt{monotone}, i.e.,%
\begin{equation}
\left\langle u-v,x-y\right\rangle \geq0\text{ for all }u\in B(x)\text{ and
}v\in B(y),
\end{equation}
and the \texttt{graph} $G(B)$ of $B,$%
\begin{equation}
G(B):=\left\{  \left(  x,u\right)  \in\mathcal{H}\times\mathcal{H}\mid u\in
B(x)\right\}  ,
\end{equation}
is not properly contained in the graph of any other monotone mapping.

(iii) The \texttt{domain} of $B$ is%
\begin{equation}
\operatorname*{dom}(B):=\left\{  x\in\mathcal{H}\mid B(x)\neq\emptyset
\right\}  .
\end{equation}

(iv) The \texttt{resolvent} of $B$ with parameter $\lambda$ is denoted and
defined by $J_{\lambda}^{B}:=\left(  I+\lambda B\right)  ^{-1}$, where $I$ is
the identity operator.
\end{definition}

\begin{remark}
\label{remark:resolvent} It is well known that for $\lambda>0$,

(i) $B$ is monotone if and only if the resolvent $J_{\lambda}^{B}$ of $B$ is
single-valued and firmly nonexpansive.

(ii) $B$ is maximal monotone if and only if $J_{\lambda}^{B}$ is
single-valued, firmly nonexpansive and $\operatorname*{dom}(J_{\lambda}%
^{B})=\mathcal{H}$.

(iii) The following equivalence holds:%
\begin{equation}
0\in B(x^{\ast})\Leftrightarrow x^{\ast}\in\operatorname{Fix}(J_{\lambda}%
^{B}). \label{eq:res-fix}%
\end{equation}

\end{remark}

It follows from (\ref{eq:res-fix}) that the SCNPP$(p,r)$ with two set-valued
maximal monotone mappings ($p=r=1$) can be seen as an SCFPP with respect to
their resolvents. In addition, Moudafi's SMVI can also be considered an SCFPP
with respect to $J_{\lambda}^{B_{1}}(I-\lambda f)$ and $J_{\lambda}^{B_{2}%
}(I-\lambda g)$ \cite[Fact 1]{Moudafi}. Now we present another known result;
see, e.g., \cite[Fact 2]{Moudafi}.

\begin{remark}
\label{remark:resolvent-2} Let $\mathcal{H}$ be a real Hilbert space, and let
a maximal monotone mapping $B:\mathcal{H}\rightarrow2^{\mathcal{H}}$ and an
$\alpha$-ism operator $h:\mathcal{H\rightarrow H}$ be given. Then the operator
$J_{\lambda}^{B}(I-\lambda h)$ is averaged for each $\lambda\in(0,2\alpha)$.
\end{remark}

Next we present an important class of operators, the $\mathfrak{T}$-class
operators. This class was introduced and investigated by Bauschke and
Combettes in \cite[Definition 2.2]{BC01} and by Combettes in
\cite{Combettes01}. Operators in this class were named \textit{directed
operators }by Zaknoon \cite{zaknoon} and further employed under this name by
Segal \cite{Seg08}, and by Censor and Segal \cite{cs08, CS09}. Cegielski
\cite[Def. 2.1]{Ceg08} studied these operators under the name
\textit{separating operators}. Since both \textit{directed }%
and\textit{\ separating }are keywords of other, widely-used, mathematical
entities, Cegielski and Censor have recently introduced the term
\textit{cutter operators} \cite{cc11}. This class coincides with the class
$\mathcal{F}^{\nu}$ for $\nu=1$ \cite{Crombez} and with the class
DC$_{\boldsymbol{p}}$ for $\boldsymbol{p}=-1$ \cite{Maruster and Popirlan}.
The term \textit{firmly quasi-nonexpansive} (FQNE) for $\mathfrak{T}$-class
operators was used by Yamada and Ogura \cite[Section B]{Y2004a, Yamada}
because every \textit{firmly nonexpansive} (FNE) mapping \cite[page
42]{Goebel} is obviously FQNE.

\begin{definition}
Let $\mathcal{H}$ be a real Hilbert space. An operator
$h:\mathcal{H\rightarrow H}$ is called a \texttt{cutter operator} if
$\operatorname*{dom}(h)=\mathcal{H}$ and%
\begin{equation}
\left\langle h\left(  x\right)  -x,h\left(  x\right)  -q\right\rangle
\leq0\text{\ for all }(x,q)\in\mathcal{H}\times\operatorname*{Fix}(h),
\label{def directed}%
\end{equation}
where the fixed point set $\operatorname*{Fix}(h)$ of $h$ is defined by%
\begin{equation}
\operatorname*{Fix}(h):=\{x\in\mathcal{H}\mid h(x)=x\}.
\end{equation}

\end{definition}

It can be seen that this class of operators coincides with the class of
\textit{firmly quasi-nonexpansive operators }(FQNE), which satisfy the
inequality%
\begin{equation}
\left\Vert h(x)-q\right\Vert ^{2}\leq\left\Vert x-q\right\Vert ^{2}-\left\Vert
x-h(x)\right\Vert ^{2}\text{\ for all }(x,q)\in\mathcal{H}\times
\operatorname*{Fix}(h).
\end{equation}
Note that the $\mathfrak{T}$-class operators include, among others, orthogonal
projections, subgradient projectors, resolvents of maximal monotone mappings,
and firmly nonexpansive operators. This last class was first introduced by
Browder \cite[Definition 6]{Browder67} under the name \textit{firmly
contractive operators}. Every $\mathfrak{T}$-class operator belongs to the
class $\mathcal{F}^{0}$ of operators, defined by Crombez \cite[p.
161]{Crombez}:%
\begin{equation}
\mathcal{F}^{0}:=\left\{  h:\mathcal{H\rightarrow H}\mid\left\Vert
h(x)-q\right\Vert \leq\left\Vert x-q\right\Vert \text{ for all }%
(x,q)\in\mathcal{H}\times\operatorname*{Fix}(h)\right\}  . \label{PC}%
\end{equation}
The elements of $\mathcal{F}^{0}$ are called \textit{quasi-nonexpansive} or
\textit{paracontracting operators}. A more general class of operators is the
class of \textit{demicontractive operators} (see, e.g., \cite{Maruster and
Popirlan}).

\begin{definition}
Let $\mathcal{H}$ be a real Hilbert space and let $h:\mathcal{H}%
\rightarrow\mathcal{H}$ be an operator.

(i) $h$ is called a \texttt{demicontractive operator} if there exists a number
$\beta\in\lbrack0,1)$ such that%
\begin{equation}
\left\Vert h(x)-q\right\Vert ^{2}\leq\left\Vert x-q\right\Vert ^{2}%
+\beta\left\Vert x-h(x)\right\Vert ^{2}\text{ for all }(x,q)\in\mathcal{H}%
\times\operatorname*{Fix}(h).
\end{equation}
This is equivalent to%
\begin{equation}
\left\langle x-h(x),x-q\right\rangle \geq\frac{1-\beta}{2}\left\Vert
x-h(x)\right\Vert ^{2}\text{ for all }(x,q)\in\mathcal{H}\times
\operatorname*{Fix}(h).
\end{equation}

\end{definition}

Another useful observation, already hinted to above, is that if $h:\mathcal{H}%
\rightarrow\mathcal{H}$ is monotone and hemicontinuous on a nonempty, closed
and convex subset $D$, then the set-valued mapping%
\begin{equation}
M(v)=\left\{
\begin{array}
[c]{cc}%
h(v)+N_{D}\left(  v\right)  , & v\in D,\\
\emptyset, & \text{otherwise,}%
\end{array}
\right.  \label{eq:T-mm}%
\end{equation}
is, by \cite[Theorem 3]{Rockafellar}, maximal monotone and $M^{-1}\left(
0\right)  =$ \textrm{SOL}$\left(  h,D\right)  $. Therefore, as mentioned in
\cite{Moudafi}, if we choose $B_{1}=N_{C}$ and $B_{2}=N_{Q}$ in
(\ref{eq:mvi-1}) and (\ref{eq:mvi-2}), respectively, then we get the SVIP of
(\ref{eq:vip})--(\ref{eq:svip}). Of course, this assertion also holds for our
SCNPP$(p,r)$ with two set-valued maximal monotone mappings ($p=r=1$) when we
take $B_{1}$ and $F_{1}$ to be similar to $M$ in (\ref{eq:T-mm}). This enables
us to solve the SVIP for monotone and hemicontinuous operators (which
constitute a larger class than the class of inverse strongly monotone
operators) by using our convergence theorem for the SVIP \cite[Theorem
6.3]{cgr}. In \cite[Theorem 6.3]{cgr} we also assumed \cite[Equation
(5.9)]{cgr} that for all $x^{\ast}\in\mathrm{SOL}\left(  f,C\right)  $,%
\begin{equation}
\langle f(x),P_{C}(I-\lambda f)(x)-x^{\ast}\rangle\geq0\text{ for all\ }%
x\in\mathcal{H}_{1},
\end{equation}
an assumption which is not needed for the convergence theorems we establish in
the present paper.

The next lemma is the well-known \textit{Demiclosedness Principle}
\cite{Browder}.

\begin{lemma}
\label{lem:demiclose} Let $\mathcal{H}$ be a Hilbert space, $D$ a closed and
convex subset of $\mathcal{H},$ and let $h:D\rightarrow\mathcal{H}$ be a
nonexpansive operator. Then $I-h$ is demiclosed at any $y\in\mathcal{H}$.
\end{lemma}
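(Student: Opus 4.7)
The plan is to take an arbitrary sequence $\{x^{k}\}_{k=0}^{\infty}\subset D$ with $x^{k}\rightharpoonup\bar{x}\in D$ and $(I-h)(x^{k})\to y$, and show that $(I-h)(\bar{x})=y$. Writing $z^{k}:=x^{k}-h(x^{k})$, one has $z^{k}\to y$ strongly and $h(x^{k})=x^{k}-z^{k}$. The target is equivalently $h(\bar{x})=\bar{x}-y$, so setting $v:=\bar{x}-h(\bar{x})-y$ it suffices to prove $v=0$.

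My first step is to start from the nonexpansiveness inequality
\begin{equation*}
\|h(x^{k})-h(\bar{x})\|^{2}\le\|x^{k}-\bar{x}\|^{2}
\end{equation*}
and expand both sides in inner products. Substituting $h(x^{k})=x^{k}-z^{k}$ on the left causes the $\|x^{k}\|^{2}$ contributions on both sides to cancel, so what remains is an inequality whose dependence on $x^{k}$ enters only linearly through inner products $\langle x^{k},\cdot\rangle$, together with quantities built from $z^{k}$, $\bar{x}$ and $h(\bar{x})$.

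Next I would pass to the limit $k\to\infty$: weak convergence of $\{x^{k}\}$ handles every linear-in-$x^{k}$ term, while strong convergence $z^{k}\to y$ handles $\|z^{k}\|^{2}$ and all cross terms involving $z^{k}$. The resulting limit inequality involves only $\bar{x}$, $h(\bar{x})$ and $y$. Using the identity
\begin{equation*}
\|h(\bar{x})+y\|^{2}=\|\bar{x}-v\|^{2}=\|\bar{x}\|^{2}-2\langle\bar{x},v\rangle+\|v\|^{2}
\end{equation*}
to rewrite the constant block, the inequality should collapse to $\|v\|^{2}\le 0$, which forces $v=0$ and completes the proof.

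The main obstacle is purely bookkeeping: arranging the quadratic expansion so that the quadratic-in-$x^{k}$ terms genuinely cancel and what survives in the limit is exactly $\|v\|^{2}$. As a sanity check and alternative route, one can argue via Opial's property: since $h(x^{k})=x^{k}-z^{k}\rightharpoonup\bar{x}-y$, if $\bar{x}-y\neq h(\bar{x})$ then Opial's strict inequality applied to $\{h(x^{k})\}$ combined with the nonexpansive bound $\|h(x^{k})-h(\bar{x})\|\le\|x^{k}-\bar{x}\|$ yields a contradiction with Opial's inequality applied to $\{x^{k}\}$. Either argument works, but the direct inner-product calculation outlined above is self-contained and does not require citing Opial.
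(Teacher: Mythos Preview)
The paper does not actually prove this lemma: it is merely stated as the well-known Demiclosedness Principle with a citation to Browder \cite{Browder}, and no argument is given. So there is no ``paper's proof'' against which to compare.

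Your direct inner-product route is correct. Expanding $\|h(x^{k})-h(\bar{x})\|^{2}\le\|x^{k}-\bar{x}\|^{2}$ with $h(x^{k})=x^{k}-z^{k}$ cancels $\|x^{k}\|^{2}$ and leaves
\[
2\langle x^{k},\,\bar{x}-z^{k}-h(\bar{x})\rangle\ \le\ \|\bar{x}\|^{2}-\|z^{k}+h(\bar{x})\|^{2}.
\]
One small point of precision: the left side is not purely linear in $x^{k}$, since it contains $\langle x^{k},z^{k}\rangle$. You still pass to the limit, but the justification is that $\{x^{k}\}$ is bounded (being weakly convergent) and $z^{k}\to y$ strongly, so $\langle x^{k},z^{k}-y\rangle\to 0$; then weak convergence of $x^{k}$ handles $\langle x^{k},y\rangle$ and $\langle x^{k},\bar{x}-h(\bar{x})\rangle$. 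With $v=\bar{x}-h(\bar{x})-y$ and $y+h(\bar{x})=\bar{x}-v$, the limit inequality reads $2\langle\bar{x},v\rangle\le\|\bar{x}\|^{2}-\|\bar{x}-v\|^{2}=2\langle\bar{x},v\rangle-\|v\|^{2}$, giving $\|v\|^{2}\le 0$ as you claimed.

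Your alternative via Opial's property is the textbook argument (and is in fact how Browder's original proof is usually packaged in Hilbert space); either route is acceptable, and both go beyond what the paper itself provides.
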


The next definition is due to Clarkson \cite{Clarkson}.

\begin{definition}
A Banach space $\mathcal{B}$ is said to be \texttt{uniformly convex} if to
each $\varepsilon\in(0,2],$ there corresponds a positive number $\delta
(\varepsilon)$ such that the conditions $\left\Vert x\right\Vert =\left\Vert
y\right\Vert =1$ and $\left\Vert x-y\right\Vert \geq\varepsilon$ imply that
$\left\Vert \left(  x+y\right)  /2\right\Vert \leq1-\delta(\varepsilon)$.
\end{definition}

It follows from the Parallelogram Identity that every Hilbert space is
uniformly convex. Next we present two known theorems, the
Krasnosel'ski\u{\i}-Mann-Opial theorem \cite{Krasnoselskii, mann, Opial67} and
the Halpern-Suzuki theorem \cite{halpern, Suzuki}.

\begin{theorem}
\label{Th:KMO}\cite{Krasnoselskii, mann, Opial67} Let $\mathcal{H}$ be a real
Hilbert space and $D\subset\mathcal{H}$ be a nonempty, closed and convex
subset of $\mathcal{H}$. Given an averaged operator $h:D\rightarrow D$ with
$\operatorname*{Fix}(h)\neq\emptyset$ and an arbitrary $x^{0}\in D$, the
sequence generated by the recursion $x^{k+1}=h(x^{k}),$ $k\geq0$, converges
weakly to a point $z\in\operatorname*{Fix}(h)$.
\end{theorem}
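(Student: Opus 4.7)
The plan is to exploit the averaged representation of $h$ together with the Demiclosedness Principle (Lemma \ref{lem:demiclose}) and Opial's property of the Hilbert space. First I would write $h=(1-c)I+cN$ with $N:D\rightarrow D$ nonexpansive and $c\in(0,1)$, and observe that $\operatorname{Fix}(h)=\operatorname{Fix}(N)$, since $h(x)=x$ is equivalent to $cN(x)=cx$. Expanding $\|h(x)-p\|^{2}$ for $p\in\operatorname{Fix}(h)$ by writing $h(x)-p=(1-c)(x-p)+c(N(x)-p)$, using the parallelogram identity and the nonexpansiveness of $N$, I would derive the quasi-Fej\'er inequality
\begin{equation}
\|h(x)-p\|^{2}\leq\|x-p\|^{2}-\frac{1-c}{c}\|h(x)-x\|^{2}.\notag
\end{equation}
(This is essentially Remark \ref{remark:av}(ii), asserting that $I-h$ is $\nu$-ism for some $\nu>1/2$.)

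Applying this with $x=x^{k}$ would immediately give that $\{\|x^{k}-p\|\}_{k=0}^{\infty}$ is nonincreasing, hence convergent, so $\{x^{k}\}$ is bounded. Summing the inequality telescopically over $k$ would then yield $\sum_{k=0}^{\infty}\|x^{k+1}-x^{k}\|^{2}<\infty$, from which I deduce the asymptotic-regularity statement $x^{k+1}-x^{k}=h(x^{k})-x^{k}\rightarrow0$, and consequently $(I-N)(x^{k})=\frac{1}{c}(x^{k}-h(x^{k}))\rightarrow0$.

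Next, since $\{x^{k}\}$ is bounded, any subsequence has a further weakly convergent subsequence $x^{k_{j}}\rightharpoonup z$; the weak closedness of the closed convex set $D$ gives $z\in D$. Because $N$ is nonexpansive on $D$, the Demiclosedness Principle (Lemma \ref{lem:demiclose}) applied to $I-N$ at $0$, combined with $(I-N)(x^{k_{j}})\rightarrow0$, yields $(I-N)(z)=0$, i.e., $z\in\operatorname{Fix}(N)=\operatorname{Fix}(h)$.

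Finally, I would upgrade subsequential weak cluster points to a unique weak limit using Opial's property of the Hilbert space. Suppose two subsequences of $\{x^{k}\}$ converge weakly to distinct fixed points $z_{1},z_{2}\in\operatorname{Fix}(h)$. Since by the first step the limits $\lim_{k\rightarrow\infty}\|x^{k}-z_{1}\|$ and $\lim_{k\rightarrow\infty}\|x^{k}-z_{2}\|$ both exist, applying Opial's inequality along each of the two subsequences and comparing would produce the contradiction
\begin{equation}
\lim_{k\rightarrow\infty}\|x^{k}-z_{1}\|<\lim_{k\rightarrow\infty}\|x^{k}-z_{2}\|<\lim_{k\rightarrow\infty}\|x^{k}-z_{1}\|.\notag
\end{equation}
Hence the entire sequence converges weakly to a single point $z\in\operatorname{Fix}(h)$. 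The main obstacle is precisely this last step: obtaining \emph{weak} (rather than merely subsequential-weak) convergence, since the iteration need not be strongly convergent and one cannot read off the limit from any strong-limit argument; the Demiclosedness Principle is indispensable to identify subsequential limits as fixed points, and Opial's property is indispensable to rule out multiple weak cluster points.
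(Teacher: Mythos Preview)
The paper does not actually supply a proof of Theorem~\ref{Th:KMO}; it is quoted as a known result with citations to Krasnosel'ski\u{\i}, Mann, and Opial, and is used only as a tool later on. Your argument is correct and is precisely the classical route to this theorem: derive Fej\'er monotonicity and asymptotic regularity from the averaged decomposition $h=(1-c)I+cN$, invoke the Demiclosedness Principle (Lemma~\ref{lem:demiclose}) to place every weak cluster point in $\operatorname{Fix}(h)$, and then use Opial's property to rule out multiple weak cluster points. This is essentially the proof one finds by combining the cited references, so there is nothing to contrast.
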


\begin{theorem}
\label{Th:Halpern}\cite{halpern, Suzuki} Let $\mathcal{H}$ be a real Hilbert
space and $D\subset\mathcal{H}$ be a closed and convex subset of $\mathcal{H}%
$. Given an averaged operator $h:D\rightarrow D,$ and a sequence $\{\alpha
_{k}\}_{k=0}^{\infty}\subset\lbrack0,1]$ satisfying $\lim_{k\rightarrow\infty
}\alpha_{k}=0$ and $\sum\limits_{k=0}^{\infty}\alpha_{k}=\infty$, the sequence
$\{x^{k}\}_{k=0}^{\infty}$ generated by $x^{0}\in D$ and $x^{k+1}=\alpha
_{k}x^{0}+(1-\alpha_{k})h(x^{k}),$ $k\geq0$, converges strongly to a point
$z\in\operatorname*{Fix}(h)$.
\end{theorem}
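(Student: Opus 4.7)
The plan is to follow the classical Halpern-type template, exploiting the averaged structure of $h$ precisely at the step where asymptotic regularity must be established. Set $z := P_{\operatorname*{Fix}(h)}(x^{0})$, which is well defined because the fixed point set of an averaged (hence nonexpansive) self-mapping of a closed convex set is closed and convex, and is nonempty by hypothesis. Write $h = (1-c)I + cN$ with $N:D\to D$ nonexpansive and $c\in(0,1)$; then $\operatorname*{Fix}(h) = \operatorname*{Fix}(N)$.

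First I would verify that $\{x^{k}\}$ is bounded by a standard induction using nonexpansiveness of $h$:
\[
\|x^{k+1} - z\| \leq \alpha_{k}\|x^{0} - z\| + (1-\alpha_{k})\|x^{k} - z\| \leq \max\{\|x^{0} - z\|,\|x^{k}-z\|\},
\]
so $\|x^{k}-z\|\leq\|x^{0}-z\|$ for every $k$. As an immediate consequence, $\|x^{k+1} - h(x^{k})\| = \alpha_{k}\|x^{0} - h(x^{k})\| \to 0$.

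The main obstacle is proving asymptotic regularity $\|x^{k+1} - x^{k}\| \to 0$ under the weak hypotheses $\alpha_{k} \to 0$ and $\sum \alpha_{k} = \infty$ alone, with no further control on $\{\alpha_{k}\}$; this is precisely where the averaged assumption is essential. I would rewrite the recursion as $x^{k+1} = \beta_{k} x^{k} + (1-\beta_{k}) u^{k}$, where $\beta_{k} := (1-\alpha_{k})(1-c) \to 1-c\in(0,1)$ (so $\beta_{k}$ eventually lies in a compact sub-interval of $(0,1)$), and $u^{k} := \bigl[\alpha_{k} x^{0} + (1-\alpha_{k}) c\,N(x^{k})\bigr]/(1-\beta_{k})$. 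A direct computation using nonexpansiveness of $N$, boundedness of $\{x^{k}\}$ and $\alpha_{k}\to 0$ yields $\limsup_{k}\bigl(\|u^{k+1} - u^{k}\| - \|x^{k+1} - x^{k}\|\bigr) \leq 0$; Suzuki's lemma on real sequences then forces $\|u^{k} - x^{k}\| \to 0$, whence $\|x^{k+1} - x^{k}\| = (1-\beta_{k})\|u^{k} - x^{k}\| \to 0$. Combined with the previous paragraph, $\|x^{k} - h(x^{k})\| \to 0$.

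To identify the strong limit, I would pass to a subsequence $x^{k_{j}} \rightharpoonup \bar{x}$ realizing $\limsup_{k}\langle x^{0} - z,\, x^{k} - z\rangle$. Since $I - h$ is demiclosed at $0$ by Lemma \ref{lem:demiclose} (an averaged operator is nonexpansive) and $x^{k_{j}} - h(x^{k_{j}}) \to 0$, one gets $\bar{x}\in\operatorname*{Fix}(h)$, so the variational characterization of $P_{\operatorname*{Fix}(h)}$ yields $\langle x^{0} - z,\,\bar{x} - z\rangle \leq 0$. Finally, expanding the squared norm of the convex combination produces
\[
\|x^{k+1} - z\|^{2} \leq (1-\alpha_{k})\|x^{k} - z\|^{2} + 2\alpha_{k}\langle x^{0} - z,\, x^{k+1} - z\rangle,
\]
and an application of the standard Xu-type lemma for nonnegative real sequences, using $\sum_{k} \alpha_{k} = \infty$ together with $\limsup_{k}\langle x^{0} - z,\,x^{k+1}-z\rangle \leq 0$, delivers the strong convergence $x^{k} \to z$.
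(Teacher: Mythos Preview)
The paper does not supply its own proof of Theorem~\ref{Th:Halpern}; it is quoted as a known preliminary result with citations to Halpern and Suzuki. Your argument is a faithful reconstruction of Suzuki's proof, exploiting the averaged structure of $h$ exactly where Suzuki does: the decomposition $x^{k+1}=\beta_{k}x^{k}+(1-\beta_{k})u^{k}$ with $\beta_{k}=(1-\alpha_{k})(1-c)$ is his device for extracting asymptotic regularity from the two minimal conditions $\alpha_{k}\to0$ and $\sum_{k}\alpha_{k}=\infty$, and the remaining steps (boundedness, demiclosedness via Lemma~\ref{lem:demiclose}, the variational characterization of $z=P_{\operatorname*{Fix}(h)}(x^{0})$, and the Xu-type recursive estimate) are the standard Halpern endgame. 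The proof is correct.

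Two small remarks. First, you write that $\operatorname*{Fix}(h)$ is ``nonempty by hypothesis,'' but the statement as recorded in the paper does not list this explicitly; it is of course an implicit standing assumption (otherwise the conclusion is void), and when the theorem is invoked in Theorem~\ref{Theorem:Halpern} the authors do check it. Second, in writing $h=(1-c)I+cN$ you assert $N:D\to D$; the definition of averaged only guarantees $N:D\to\mathcal{H}$ nonexpansive, but this is harmless since your argument never uses that $N$ is a self-map of $D$---the iteration stays in $D$ because $h$ does and $D$ is convex.
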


\section{Weak convergence\label{Sec: Weak}}

In this section we first present an algorithm for solving the SCNPP$(p,r)$ for
two set-valued maximal monotone mappings. Then, for the general case of more
than two such set-valued mappings, we employ a product space formulation in
order to transform it into an SCNPP$(1,1)$ for two set-valued maximal monotone
mappings, in a similar fashion to what has been done in \cite[Section 4]{CS09}
and \cite[Subsection 6.1]{cgr}.

\subsection{The SCNPP$(1,1)$ for set-valued maximal monotone mappings}

Consider the SCNPP$(p,r)$ (\ref{eq:mszp-1})--(\ref{eq:mszp-2}) with $p=r=1$.
That is, given two set-valued mappings $B_{1}:\mathcal{H}_{1}\rightarrow
2^{\mathcal{H}_{1}}$ and $F_{1}:\mathcal{H}_{2}\rightarrow2^{\mathcal{H}_{2}%
},$ and a bounded linear operator $A:\mathcal{H}_{1}\rightarrow\mathcal{H}%
_{2}$, we want to%
\begin{equation}
\text{find a point }x^{\ast}\in\mathcal{H}_{1}\text{ such that }0\in
B_{1}(x^{\ast})\text{ and }0\in F_{1}(A\left(  x^{\ast}\right)  ).
\label{eq:two-SZP}%
\end{equation}
Here is our proposed algorithm for solving (\ref{eq:two-SZP}).

\begin{algorithm}
\label{alg:CGR}$\left.  {}\right.  $

\textbf{Initialization:} Let $\lambda>0$ and select an arbitrary starting
point $x^{0}\in\mathcal{H}_{1}$.

\textbf{Iterative step:} Given the current iterate $x^{k},$ compute%
\begin{equation}
x^{k+1}=J_{\lambda}^{B_{1}}\left(  x^{k}-\gamma A^{\ast}(I-J_{\lambda}^{F_{1}%
})A\left(  x^{k}\right)  \right)  , \label{eq:cgr-iterate}%
\end{equation}
where $A^{\ast}$ is the adjoint of $A$, $L=\left\Vert A^{\ast}A\right\Vert $
and $\gamma\in(0,2/L)$.
\end{algorithm}

The convergence theorem for this algorithm is presented next. We denote by
$\Gamma$ the solution set of (\ref{eq:two-SZP}).

\begin{theorem}
\label{Theorem:1}Let $\mathcal{H}_{1}$ and $\mathcal{H}_{2}$ be two real
Hilbert spaces. Given two set-valued maximal monotone mappings $B_{1}%
:\mathcal{H}_{1}\rightarrow2^{\mathcal{H}_{1}}$ and $F_{1}:\mathcal{H}%
_{2}\rightarrow2^{\mathcal{H}_{2}},$ and a bounded linear operator
$A:\mathcal{H}_{1}\rightarrow\mathcal{H}_{2},$ any sequence $\left\{
x^{k}\right\}  _{k=0}^{\infty}$ generated by Algorithm \ref{alg:CGR} converges
weakly to a point $x^{\ast}\in\Gamma$, provided that $\Gamma\neq\emptyset$ and
$\gamma\in(0,2/L)$, where $L=\left\Vert A^{\ast}A\right\Vert $.
\end{theorem}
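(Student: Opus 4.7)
The plan is to rewrite the iteration as $x^{k+1}=T(x^{k})$, where
\[
T:=J_{\lambda}^{B_{1}}\bigl(I-\gamma A^{\ast}(I-J_{\lambda}^{F_{1}})A\bigr),
\]
prove that $T$ is an averaged self-mapping of $\mathcal{H}_{1}$ whose fixed-point set equals $\Gamma$, and then invoke the Krasnosel'ski\u{\i}-Mann-Opial theorem (Theorem~\ref{Th:KMO}) to obtain weak convergence to some point of $\operatorname*{Fix}(T)=\Gamma$.

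To show $T$ is averaged, I would first treat the ``inner'' operator $h:=A^{\ast}(I-J_{\lambda}^{F_{1}})A$. By Remark~\ref{remark:resolvent}(ii), $J_{\lambda}^{F_{1}}$ is firmly nonexpansive, so by Remark~\ref{remark:av}(iii) its complement $I-J_{\lambda}^{F_{1}}$ is also firmly nonexpansive, i.e.\ $1$-ism. A direct computation using $\|A^{\ast}z\|^{2}\leq L\|z\|^{2}\|$ and $\langle A^{\ast}w,x-y\rangle=\langle w,Ax-Ay\rangle$ then gives
\[
\langle h(x)-h(y),x-y\rangle \geq \|(I-J_{\lambda}^{F_{1}})Ax-(I-J_{\lambda}^{F_{1}})Ay\|^{2}\geq \tfrac{1}{L}\|h(x)-h(y)\|^{2},
\]
so $h$ is $(1/L)$-ism. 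Remark~\ref{remark:resolvent-2} then yields that $T=J_{\lambda}^{B_{1}}(I-\gamma h)$ is averaged for every $\gamma\in(0,2/L)$, which is exactly the hypothesis.

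The main step I expect to require care is showing $\operatorname*{Fix}(T)=\Gamma$. The inclusion $\Gamma\subseteq\operatorname*{Fix}(T)$ is immediate from the equivalence in Remark~\ref{remark:resolvent}(iii): if $x^{\ast}\in\Gamma$, then $J_{\lambda}^{F_{1}}(Ax^{\ast})=Ax^{\ast}$ and $J_{\lambda}^{B_{1}}(x^{\ast})=x^{\ast}$, so $T(x^{\ast})=x^{\ast}$. For the reverse inclusion I would pick any $u\in\Gamma$ (nonempty by assumption) and any $x\in\operatorname*{Fix}(T)$, and apply the firm nonexpansiveness of $J_{\lambda}^{B_{1}}$ in the form $\|J_{\lambda}^{B_{1}}(a)-J_{\lambda}^{B_{1}}(b)\|^{2}\leq\|a-b\|^{2}-\|(I-J_{\lambda}^{B_{1}})(a)-(I-J_{\lambda}^{B_{1}})(b)\|^{2}$ with $a=x-\gamma A^{\ast}(I-J_{\lambda}^{F_{1}})Ax$ and $b=u$. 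Expanding and cancelling the $\gamma^{2}$ term produces
\[
0 \leq -2\gamma\langle Ax-Au,(I-J_{\lambda}^{F_{1}})Ax\rangle.
\]
Combining this with the firm nonexpansiveness of $I-J_{\lambda}^{F_{1}}$ (together with $(I-J_{\lambda}^{F_{1}})Au=0$), which yields $\langle Ax-Au,(I-J_{\lambda}^{F_{1}})Ax\rangle\geq\|(I-J_{\lambda}^{F_{1}})Ax\|^{2}$, forces $(I-J_{\lambda}^{F_{1}})Ax=0$; substituting back into $x=T(x)$ leaves $x=J_{\lambda}^{B_{1}}(x)$, so $x\in\Gamma$ by Remark~\ref{remark:resolvent}(iii).

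With $T$ averaged, $\operatorname*{Fix}(T)=\Gamma\neq\emptyset$, and $T:\mathcal{H}_{1}\to\mathcal{H}_{1}$, Theorem~\ref{Th:KMO} applied with $D=\mathcal{H}_{1}$ delivers weak convergence of $\{x^{k}\}$ to some $x^{\ast}\in\Gamma$, completing the argument. The technical crux is really the fixed-point characterization via the firm nonexpansiveness chain above; everything else is assembly of the preliminaries already recorded in Section~\ref{Sec: Pre}.
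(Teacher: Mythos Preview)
Your proposal is correct and follows essentially the same route as the paper: the paper's proof simply observes that the result is a corollary of Moudafi's \cite[Theorem~3.1]{Moudafi}, whose argument is exactly the one you spell out---show that $T=J_{\lambda}^{B_{1}}(I-\gamma A^{\ast}(I-J_{\lambda}^{F_{1}})A)$ is averaged with $\operatorname*{Fix}(T)=\Gamma$, then apply Theorem~\ref{Th:KMO}. One minor point: Remark~\ref{remark:resolvent-2} as stated uses the same parameter $\lambda$ in both the resolvent and the step, whereas you have $\lambda$ and $\gamma$; this is harmless since the averagedness of $J_{\lambda}^{B_{1}}$ holds for any $\lambda>0$, so you can bypass the remark by invoking Remark~\ref{remark:av}(ii)--(iv) directly.
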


\begin{proof}
In view of the connection between our SCNPP$(p,r)$ and Moudafi's SMVI, this
theorem can be obtained as a corollary of \cite[Theorem 3.1]{Moudafi}, the
proof of which is based on the Krasnosel'ski\u{\i}-Mann-Opial theorem
\cite{Krasnoselskii, mann, Opial67}.
\end{proof}

\begin{remark}
Observe that in Theorem \ref{Theorem:1} we assume that $\gamma\in(0,2/L)$,
while in \cite[Theorem 6.3]{cgr}, $\gamma$ is assumed to be in $(0,1/L)$,
which obviously was a more restrictive assumption.
\end{remark}

To describe the relationship of our work with splitting methods, let
$\mathcal{H}$ be a real Hilbert space, and let $B:\mathcal{H}\rightarrow
2^{\mathcal{H}}$ and $F:\mathcal{H}\rightarrow2^{\mathcal{H}}$ be two maximal
monotone mappings. Consider the following problem:%
\begin{equation}
\text{find a point }x^{\ast}\in\mathcal{H}\text{ such that }0\in B(x^{\ast
})+F(x^{\ast}). \label{eq:sum-zero}%
\end{equation}
Many algorithms were developed for solving this problem. An important class of
such algorithms is the class of splitting methods. References on splitting
methods and their applications can be found in Eckstein's Ph.D. thesis
\cite{Eckstein89}, in Tseng's work \cite{Tseng90, Tseng91, Tseng00} and more
recently in Combettes et al. \cite{Combettes04, cw05, cp11}.

One splitting method of interest is the following forward-backward algorithm:%
\begin{equation}
x^{k+1}=J^{B}\left(  I-h \right)  \left(  x^{k}\right)  , \label{eq:fb}%
\end{equation}
where $F = h$ is single-valued. Combettes \cite[Section 6]{Combettes04} was
interested in (\ref{eq:fb}) under the assumption that $B:\mathcal{H}%
\rightarrow2^{\mathcal{H}}$ and $h:\mathcal{H}\rightarrow\mathcal{H}$ are
maximal monotone, and $\beta h$ is firmly nonexpansive (i.e., $1/2$-av) for
some $\beta\in(0,\infty)$. He proposed the following algorithm:%
\begin{equation}
x^{k+1}=x^{k}+\lambda_{k}\left(  J_{\gamma_{k}}^{B}\left(  x^{k}-\gamma
_{k}(h\left(  x^{k}\right)  +b_{k})\right)  +a_{k}-x^{k}\right)  ,
\label{eq:combettes}%
\end{equation}
where the sequence $\left\{  \gamma_{k}\right\}  _{k=0}^{\infty}$ is bounded
and the sequences $\left\{  a_{k}\right\}  _{k=0}^{\infty}$ and $\left\{
b_{k}\right\}  _{k=0}^{\infty}$ are absolutely summable errors in the
computation of the resolvents. It can be seen that the iterative step
(\ref{eq:cgr-iterate}) is a special case of (\ref{eq:fb}) with $h=\gamma
A^{\ast}(I-J_{\lambda}^{F_{1}})A$. In the setting of Theorem \ref{Theorem:1}
here, $h$ is $1/\left(  \gamma L\right)  $-ism and therefore for
$\beta=(\gamma L)^{-1}$, the operator $\beta\gamma A^{\ast}(I-J_{\lambda
}^{F_{1}})A$ is $1$-ism, that is, firmly nonexpansive. Now by \cite[Example
20.27]{BC-book-2011}, this operator is maximal monotone. Therefore Algorithm
\ref{alg:CGR} is a special case of (\ref{eq:combettes}) without relaxation and
we also need to calculate the exact resolvent. It may be somewhat surprising
that our SCNPP is formulated in two different spaces, while (\ref{eq:sum-zero}%
) is only defined in one space and still we arrive at the same algorithm.
Further related results on proximal feasibility problems appear in Combettes
and Wajs \cite[Subsection 4.3]{cw05}.

\subsection{The \textbf{general} SCNPP$(p,r)$\label{Subsec: SCNPP-SCFPP}}

In view of Remark \ref{remark:resolvent}, we can show, by applying similar
arguments to those used in \cite{CS09}, that our SCNPP$(p,r)$ can be
transformed into a split common fixed point problem (SCFPP) (see
(\ref{eq:1.15})) with two operators\textbf{ }$T$\textbf{ }and\textbf{ }$U$ in
a product space. Next, we show how the general SCNPP$(p,r)$ can be transformed
into an SCNPP$(1,1)$ for two set-valued maximal monotone mappings.

Consider the space\textbf{ }$\boldsymbol{H}=\mathcal{H}_{1}^{p}\times
\mathcal{H}_{2}^{r},$\textbf{ }and the set-valued maximal monotone
mappings\textbf{ }$D:\mathcal{H}_{1}\rightarrow2^{\mathcal{H}_{1}}$ and
$\boldsymbol{F}:\boldsymbol{H}\rightarrow2^{\boldsymbol{H}}$\textbf{ }defined
by $D(x)=\{0\}$\textbf{ }for all\textbf{ }$x\in\mathcal{H}_{1}$\textbf{
}and\textbf{ }$\boldsymbol{F}(\left(  x^{1},\ldots,x^{p},y^{1},\ldots
,y^{r}\right)  )=B_{1}\left(  x^{1}\right)  \times\ldots\times B_{p}\left(
x^{p}\right)  \times F_{1}\left(  y^{1}\right)  \times\ldots\times
F_{r}\left(  y^{r}\right)  $ for each\textbf{ }$\left(  x^{1},\ldots
,x^{p},y^{1},\ldots,y^{r}\right)  \in\boldsymbol{H}.$\textbf{ }In addition,
let the bounded linear operator\textbf{ }$\boldsymbol{A}:\mathcal{H}%
_{1}\rightarrow\boldsymbol{H}$\textbf{ }be defined by\textbf{ }$\boldsymbol{A}%
\left(  x\right)  =\left(  x,\ldots,x,A_{1}\left(  x\right)  ,\ldots
,A_{r}\left(  x\right)  \right)  $\textbf{ }for all\textbf{ }$x\in
\mathcal{H}_{1}$. Then the general SCNPP$(p,r)$ (\ref{eq:mszp-1}%
)--(\ref{eq:mszp-2}) is equivalent to%
\begin{equation}
\text{find a point }x\in\mathcal{H}_{1}\text{ such that }0\in D(x)\text{ and
}0\in\boldsymbol{F}\left(  \boldsymbol{A}\left(  x\right)  \right)  .
\label{eq:prod}%
\end{equation}

When Algorithm \ref{alg:CGR} is applied to this two-set problem in the product
space $\boldsymbol{H}$ and then translated back to the original spaces, it
takes the following form.

\begin{algorithm}
\label{Alg: CCGR}$\left.  {}\right.  $

\textbf{Initialization:}$\ $Select an arbitrary starting point $x^{0}%
\in\mathcal{H}_{1}$.

\textbf{Iterative step: }Given the current iterate $x^{k},$ compute%
\begin{equation}
x^{k+1}=x^{k}+\gamma\left(  \sum_{i=1}^{p}\left(  J_{\lambda}^{B_{i}}%
(x^{k})-x^{k}\right)  +\sum_{j=1}^{r}A_{j}^{\ast}(J_{\lambda}^{F_{j}}%
-I)A_{j}\left(  x^{k}\right)  \right)  ,
\end{equation}
where $\gamma\in(0,2/L),$ with $L=p+\sum_{j=1}^{r}\Vert A_{j}\Vert^{2}$.
\end{algorithm}

The convergence of this algorithm follows from Theorem \ref{Theorem:1}. We may
also introduce relaxation parameters into the above algorithm as has been done
in the relaxed version of \cite[equation 2.10]{Moudafi10}.

\section{Strong convergence\label{Sec: Str}}

We focus on the SCNPP$(p,r)$ for two set-valued maximal monotone mappings,
keeping in mind that for the general case we can always apply the above
product space formulation and then translate back the algorithms to the
original spaces. In this section we first present a strong convergence theorem
for Algorithm \ref{alg:CGR} under an additional assumption. This result relies
on the work of Browder and Petryshyn \cite[Theorem 5]{Browder+Petryshyn}, and
on that of Baillon, Bruck and Reich \cite[Theorem 1.1]{bbr} (see also
\cite[Lemma 7]{Masad+Reich}). Then we study a second algorithm which is a
modification of Algorithm \ref{alg:CGR} that results in a Halpern-type
algorithm. The third algorithm in this section is inspired by Haugazeau's
method \cite{Haugazeau68}; see also \cite{BC01}.

\subsection{Strong convergence of Algorithm \ref{alg:CGR}}

The next two theorems are needed for our proof of Theorem
\ref{Theorem:Strong*}. We present their full proofs for the reader's convenience.

\begin{theorem}
\label{Theorem:browder}\cite[Theorem 5]{Browder+Petryshyn}, \cite{Ishikawa}
Let $\mathcal{B}$ be a uniformly convex Banach space. If the operator
$S:\mathcal{B}\rightarrow\mathcal{B}$ is nonexpansive with a nonempty fixed
point set $\operatorname*{Fix}\left(  S\right)  \neq\emptyset$, then for any
given constant $c\in(0,1)$, the operator $S_{c}:=cI+(1-c)S$ is asymptotically
regular and has the same fixed points as $S$.
\end{theorem}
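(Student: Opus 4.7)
The plan is straightforward in outline: verify the fixed-point identity, then exploit uniform convexity via a standard auxiliary lemma about convex combinations. The fixed-point part is immediate: $cx+(1-c)S(x)=x$ rearranges to $(1-c)\bigl(S(x)-x\bigr)=0$, and $c<1$ forces $S(x)=x$, giving $\operatorname*{Fix}(S_{c})=\operatorname*{Fix}(S)$.

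For asymptotic regularity, I would fix an arbitrary $x^{0}\in\mathcal{B}$, set $x^{k}:=S_{c}^{k}(x^{0})$, and choose $p\in\operatorname*{Fix}(S_{c})=\operatorname*{Fix}(S)$, which exists by hypothesis. Since $S$ is nonexpansive, $S_{c}$ is a convex combination of two nonexpansive maps and hence itself nonexpansive, so $a_{k}:=\|x^{k}-p\|$ is nonincreasing and converges to some $d\ge 0$. If $d=0$ then $x^{k}\to p$ and $x^{k+1}-x^{k}\to 0$ by continuity, so assume $d>0$. Writing
\begin{equation*}
x^{k+1}-p = c(x^{k}-p) + (1-c)\bigl(S(x^{k})-p\bigr),
\end{equation*}
put $u_{k}:=x^{k}-p$ and $v_{k}:=S(x^{k})-p$. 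Then $\|u_{k}\|=a_{k}\to d$, $\|v_{k}\|\le a_{k}$, and $\|cu_{k}+(1-c)v_{k}\|=a_{k+1}\to d$. The triangle inequality $a_{k+1}\le c\|u_{k}\|+(1-c)\|v_{k}\|$ forces $\liminf_{k}\|v_{k}\|\ge d$, so in fact $\|v_{k}\|\to d$ as well.

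At this point I would invoke the standard uniform-convexity lemma: in a uniformly convex Banach space, if $\|u_{k}\|\to d$, $\|v_{k}\|\to d$, and $\|cu_{k}+(1-c)v_{k}\|\to d$ for a fixed $c\in(0,1)$ and some $d>0$, then $\|u_{k}-v_{k}\|\to 0$. Granting this, asymptotic regularity follows in one line from
\begin{equation*}
\|x^{k+1}-x^{k}\| = (1-c)\|S(x^{k})-x^{k}\| = (1-c)\|u_{k}-v_{k}\|\to 0.
\end{equation*}

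The expected main obstacle is precisely that uniform-convexity lemma; everything else is monotone-and-limits bookkeeping. I would prove it by contradiction: if some subsequence satisfied $\|u_{k_{j}}-v_{k_{j}}\|\ge\varepsilon>0$, then dividing through by the common limit $d>0$ would produce two sequences of almost-unit vectors whose convex combination $cu_{k_{j}}+(1-c)v_{k_{j}}$ has norm approaching $d$, i.e., whose rescaled midpoint approaches the unit sphere. This contradicts the quantitative estimate furnished by the modulus of convexity of $\mathcal{B}$ (uniform convexity rules out unit-ball segments whose interior point has norm near $1$ unless the endpoints coincide). Once this quantitative ingredient is available, the proof of the theorem is routine.
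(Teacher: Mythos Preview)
Your proof is correct and follows essentially the same route as the paper's: both pick a fixed point $p$ (the paper calls it $u$), use nonexpansiveness to get $\|x^{k}-p\|\downarrow d$, write $x^{k+1}-p$ as the convex combination $c(x^{k}-p)+(1-c)(S(x^{k})-p)$, and then invoke uniform convexity to force $x^{k}-S(x^{k})\to 0$. Your version is simply more explicit---you separate the case $d=0$, derive $\|v_{k}\|\to d$ via the triangle inequality, and sketch the uniform-convexity lemma---whereas the paper compresses these steps into a terse appeal to uniform convexity.
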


\begin{proof}
It is obvious that $\operatorname*{Fix}\left(  S\right)  =\operatorname*{Fix}%
\left(  S_{c}\right)  $ and that $S_{c}$ is also a nonexpansive self-mapping
of $\mathcal{B}$. Let $u\in\operatorname*{Fix}\left(  S_{c}\right)  $ and for
a given $x\in\mathcal{B}$, let $x^{k}=S_{c}^{k}(x).$ Since $S_{c}$ is
nonexpansive and $u\in\operatorname*{Fix}\left(  S_{c}\right)  ,$ it follows
that%
\begin{equation}
\left\Vert x^{k+1}-u\right\Vert \leq\left\Vert x^{k}-u\right\Vert \text{ for
all }k\geq0.
\end{equation}
Therefore there exists $\lim_{k\rightarrow\infty}\left\Vert x^{k}-u\right\Vert
=\ell\geq0$. Assume that $\ell>0$. Then%
\begin{align}
x^{k+1}-u  &  =S_{c}^{k+1}(x)-u=S_{c}(x^{k})-u\nonumber\\
&  =\left(  cI+(1-c)S\right)  (x^{k})-u\nonumber\\
&  =c(x^{k}-u)+(1-c)\left(  S(x^{k})-u\right)  .
\end{align}
Since%
\begin{equation}
\lim_{k\rightarrow\infty}\left\Vert x^{k}-u\right\Vert =\lim_{k\rightarrow
\infty}\left\Vert x^{k+1}-u\right\Vert =\ell
\end{equation}
and%
\begin{equation}
\left\Vert x^{k+1}-u\right\Vert =\left\Vert S(x^{k})-u\right\Vert
\leq\left\Vert x^{k}-u\right\Vert ,
\end{equation}
the uniform convexity of $\mathcal{B}$ implies that%
\begin{equation}
\lim_{k\rightarrow\infty}\left\Vert \left(  x^{k}-u\right)  -\left(
S(x^{k})-u\right)  \right\Vert =0,
\end{equation}
i.e., $x^{k}-S(x^{k})\rightarrow0.$ Hence $x^{k+1}-x^{k}\rightarrow0$, which
means that $S_{c}$ is asymptotically regular, as claimed.
\end{proof}

\begin{theorem}
\label{Theorem:bbr}\cite[Theorem 1.1]{bbr} Let $\mathcal{B}$ be a uniformly
convex Banach space. If the operator $S:\mathcal{B}\rightarrow\mathcal{B}$ is
nonexpansive, odd and asymptotically regular at $x\in\mathcal{B}$, then the
sequence $\left\{  S^{k}(x)\right\}  _{k=0}^{\infty}$ converges strongly to a
fixed point of $S$.
\end{theorem}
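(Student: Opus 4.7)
The goal is to show that $x_k := S^k(x)$ is Cauchy and then identify the strong limit as a fixed point of $S$. Oddness of $S$, which was not needed in Theorem~\ref{Theorem:browder}, is the new ingredient; it yields a ``parallelogram-type'' inequality which, combined with asymptotic regularity and uniform convexity, will give uniform-in-$p$ control over $\|x_n - x_{n+p}\|$.

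First, oddness forces $S(0) = 0$, so $0 \in \operatorname*{Fix}(S)$ and nonexpansiveness makes $\|x_{k+1}\| = \|S(x_k) - S(0)\| \leq \|x_k\|$; hence $\|x_k\| \downarrow \ell$ for some $\ell \geq 0$. If $\ell = 0$ then $x_k \to 0 \in \operatorname*{Fix}(S)$ and we are done, so henceforth assume $\ell > 0$. The key estimate to establish is
\[
2\ell \leq \|x_n + x_{n+p}\| \leq 2\|x_n\| \qquad \text{for all } n, p \geq 0.
\]
The upper bound is immediate from the monotonicity of $\|x_k\|$. For the lower bound, oddness plus nonexpansiveness gives $\|S(u) + S(v)\| = \|S(u) - S(-v)\| \leq \|u + v\|$; applied with $u = x_n$, $v = x_{n+p}$ it shows that for each fixed $p$ the sequence $\{\|x_n + x_{n+p}\|\}_n$ is nonincreasing. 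Asymptotic regularity plus telescoping yields $\|x_n - x_{n+p}\| \to 0$ as $n\to\infty$ for each fixed $p$, so both sides in
\[
2\|x_n\| - \|x_n - x_{n+p}\| \leq \|x_n + x_{n+p}\| \leq \|x_n\| + \|x_{n+p}\|
\]
tend to $2\ell$; being monotone with this limit, the middle quantity stays $\geq 2\ell$.

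Next I would apply uniform convexity. Setting $r_n := \|x_n\|$, if $\|x_n - x_{n+p}\| \geq \epsilon$ then the modulus $\delta$ of $\mathcal{B}$ yields $\|x_n + x_{n+p}\| \leq 2 r_n \bigl(1 - \delta(\epsilon/r_n)\bigr)$. Combining this with the lower bound $\|x_n + x_{n+p}\| \geq 2\ell$ forces $\delta(\epsilon/r_n) \leq 1 - \ell/r_n$. The right-hand side tends to $0$ as $r_n \downarrow \ell$, while the left-hand side is bounded below by $\delta(\epsilon/r_0) > 0$ (using monotonicity of $\delta$ and $r_n \leq r_0$). For $n$ large this is impossible for every $p$, so $\{x_n\}$ is Cauchy. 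Setting $x^* := \lim x_n$ and using continuity of the nonexpansive $S$, the identities $x_{n+1} = S(x_n) \to S(x^*)$ and $x_{n+1} \to x^*$ force $x^* \in \operatorname*{Fix}(S)$.

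The main obstacle is converting the per-gap convergence supplied by asymptotic regularity (which only controls $\|x_n - x_{n+p}\|$ for each fixed $p$) into a Cauchy estimate that is uniform in $p$. The key sandwich achieves this by pinching $\|x_n + x_{n+p}\|$ between two quantities that both approach $2\ell$ independently of $p$; uniform convexity then converts proximity to $2\ell$ into smallness of the gap $\|x_n - x_{n+p}\|$, uniformly in $p$.
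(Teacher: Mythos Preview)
Your argument is correct and follows essentially the same route as the paper's proof: oddness gives $S(0)=0$, nonexpansiveness makes $\|x_k\|$ decreasing with limit $\ell$, oddness plus nonexpansiveness makes $\|x_n+x_{n+p}\|$ nonincreasing in $n$, asymptotic regularity together with the triangle inequality pins $\|x_n+x_{n+p}\|\geq 2\ell$ for all $n,p$, and uniform convexity then forces the sequence to be Cauchy. The only difference is cosmetic: the paper simply asserts that $\|x_n\|\to\ell$ and $\|x_n+x_m\|\to 2\ell$ imply $\|x_n-x_m\|\to 0$ ``by uniform convexity,'' whereas you spell out this last step explicitly via the modulus $\delta$ and also record the (trivial) passage to $x^\ast\in\operatorname*{Fix}(S)$ by continuity.
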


\begin{proof}
Since $S$ is odd, $S(0)=-S(0)$ and $S(0)=0$. Since $S$ is nonexpansive, we
have by the triangle inequality,%
\begin{align}
\left\Vert S^{k}(x)\right\Vert  &  =\left\Vert S^{k}(x)\right\Vert -\left\Vert
S^{k}(0)\right\Vert \leq\left\Vert S^{k}(x)-S^{k}(0)\right\Vert \nonumber\\
&  \leq\left\Vert S^{k-1}(x)-S^{k-1}(0)\right\Vert =\left\Vert S^{k-1}%
(x)\right\Vert \leq\cdots\leq\left\Vert x-0\right\Vert =\left\Vert
x\right\Vert ,
\end{align}
which means that the sequence $\left\{  \left\Vert S^{k}(x)\right\Vert
\right\}  _{k=0}^{\infty}$ is decreasing and bounded. Therefore the limit
$\lim_{k\rightarrow\infty}\left\Vert S^{k}(x)\right\Vert $ exists and, for a
fixed $i,$ the sequence\newline$\left\{  \left\Vert S^{k+i}(x)+S^{k}%
(x)\right\Vert \right\}  _{k=0}^{\infty}$ is decreasing. Let $\lim
_{k\rightarrow\infty}\left\Vert S^{k}(x)\right\Vert =d$. Then by the triangle
inequality,%
\begin{align}
2d  &  \leq\left\Vert 2S^{k}(x)\right\Vert =\left\Vert S^{k}(x)-S^{k+i}%
(x)+S^{k+i}(x)+S^{k}(x)\right\Vert \nonumber\\
&  \leq\left\Vert S^{k}(x)-S^{k+i}(x)\right\Vert +\left\Vert S^{k}%
(x)+S^{k+i}(x)\right\Vert .
\end{align}
Since $S$ is asymptotically regular at $x$, $\lim_{k\rightarrow\infty
}\left\Vert S^{k}(x)-S^{k+i}(x)\right\Vert =0$. Thus $\lim_{k\rightarrow
\infty}\left\Vert S^{k}(x)+S^{k+i}(x)\right\Vert \geq2d.$ But the sequence
$\left\{  \left\Vert S^{k+i}(x)+S^{k}(x)\right\Vert \right\}  _{k=0}^{\infty}$
is decreasing, so that $\left\Vert S^{k}(x)+S^{k+i}(x)\right\Vert \geq2d$ for
all $k$ and $i$. We now have $\lim_{k\rightarrow\infty}\left\Vert
S^{k}(x)\right\Vert =d$ and $\lim_{m,n\rightarrow\infty}\left\Vert
S^{n}(x)+S^{m}(x)\right\Vert =2d$. The uniform convexity of $\mathcal{B}$
implies that $\lim_{m,n\rightarrow\infty}\left\Vert S^{n}(x)-S^{m}%
(x)\right\Vert =0$, whence $\left\{  S^{k}(x)\right\}  _{k=0}^{\infty}$
converges strongly to a fixed point of $S.$
\end{proof}

In Theorem \ref{Theorem:Strong*} we need the resolvent $J_{\lambda}^{B}$ to be
odd, which means that%
\begin{equation}
\left(  \left(  I+\lambda B\right)  ^{-1}\right)  (-x)=-\left(  \left(
I+\lambda B\right)  ^{-1}\right)  (x)\text{\ for all }x\in\mathcal{H}.
\label{eq:odd}%
\end{equation}
Denote%
\begin{equation}
\left(  \left(  I+\lambda B\right)  ^{-1}\right)  (-x)=y\text{ and }\left(
\left(  I+\lambda B\right)  ^{-1}\right)  (x)=z.
\end{equation}
Then%
\begin{equation}
-x\in y+\lambda B(y)\text{ and }x\in z+\lambda B(z).
\end{equation}
If $B$ is odd, then%
\begin{equation}
x\in-y+\lambda B(-y).
\end{equation}
Hence $-y=z$, which is (\ref{eq:odd}). Therefore we assume in the following
theorem that both $B_{1}$ and $F_{1}$ are odd.

Now we are ready to present the strong convergence theorem for Algorithm
\ref{alg:CGR}. Its proof relies on Theorem \ref{Theorem:bbr}.

\begin{theorem}
\label{Theorem:Strong*} Let $\mathcal{H}_{1}$ and $\mathcal{H}_{2}$ be two
real Hilbert spaces. Let two set-valued, odd and maximal monotone mappings
$B_{1}:\mathcal{H}_{1}\rightarrow2^{\mathcal{H}_{1}}$ and $F_{1}%
:\mathcal{H}_{2}\rightarrow2^{\mathcal{H}_{2}},$ and a bounded linear operator
$A:\mathcal{H}_{1}\rightarrow\mathcal{H}_{2}$ be given. If $\gamma\in(0,2/L),$
then any sequence $\left\{  x^{k}\right\}  _{k=0}^{\infty}$ generated by
Algorithm \ref{alg:CGR} converges strongly to $x^{\ast}\in\Gamma$.
\end{theorem}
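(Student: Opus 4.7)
The plan is to write the iteration as $x^{k+1} = S(x^{k})$ with
$$S := J_{\lambda}^{B_{1}}\bigl(I - \gamma A^{\ast}(I - J_{\lambda}^{F_{1}})A\bigr),$$
and then verify the three hypotheses of the Baillon--Bruck--Reich theorem (Theorem \ref{Theorem:bbr}) for $S$: nonexpansiveness, oddness, and asymptotic regularity at $x^{0}$. Once these are in hand, Theorem \ref{Theorem:bbr} gives strong convergence of $\{S^{k}(x^{0})\}_{k=0}^{\infty} = \{x^{k}\}_{k=0}^{\infty}$ to a fixed point of $S$, which in turn is a point of $\Gamma$ by the equivalence $0 \in B_{1}(x^{\ast}) \Leftrightarrow x^{\ast} \in \operatorname{Fix}(J_{\lambda}^{B_{1}})$ of Remark \ref{remark:resolvent}(iii).

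First I would recover (as in the weak convergence proof) the fact that $S$ is averaged and hence nonexpansive. By Remark \ref{remark:resolvent}(i)--(iii), $J_{\lambda}^{F_{1}}$ is firmly nonexpansive, so its complement $I - J_{\lambda}^{F_{1}}$ is $1$-ism; composing with $A$ and $A^{\ast}$ shows $h := A^{\ast}(I - J_{\lambda}^{F_{1}})A$ is $(1/L)$-ism. Hence by Remark \ref{remark:resolvent-2}, $S = J_{\lambda}^{B_{1}}(I - \gamma h)$ is averaged whenever $\gamma \in (0, 2/L)$, and we may write $S = cI + (1-c)N$ for some nonexpansive $N: \mathcal{H}_{1}\to\mathcal{H}_{1}$ and $c \in (0,1)$, with $\operatorname{Fix}(N) = \operatorname{Fix}(S)$.

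Next I would establish oddness of $S$. The paragraph preceding the theorem already shows that when $B_{1}$ and $F_{1}$ are odd, their resolvents $J_{\lambda}^{B_{1}}$ and $J_{\lambda}^{F_{1}}$ are odd as well. Then $I - J_{\lambda}^{F_{1}}$ is odd, and since $A$ and $A^{\ast}$ are linear (hence odd), $h$ is odd, $I - \gamma h$ is odd, and finally $S = J_{\lambda}^{B_{1}}\circ(I - \gamma h)$ is odd as a composition of odd maps. In particular $S(0) = 0$, so $0 \in \operatorname{Fix}(S) = \operatorname{Fix}(N)$, and $\Gamma$ is automatically nonempty. With $\operatorname{Fix}(N)\neq\emptyset$, Theorem \ref{Theorem:browder} (applied to the nonexpansive $N$ with constant $c$) yields that $S = cI + (1-c)N$ is asymptotically regular and shares its fixed points with $N$. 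All three hypotheses of Theorem \ref{Theorem:bbr} are thereby satisfied for $S$ at the starting point $x^{0}$, so $\{x^{k}\}_{k=0}^{\infty}$ converges strongly to some $x^{\ast} \in \operatorname{Fix}(S)$; the characterization of $\operatorname{Fix}(S)$ (or, equivalently, the weak convergence Theorem \ref{Theorem:1}) identifies $x^{\ast}$ with a point of $\Gamma$.

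The main obstacle is the oddness step: one must be confident that odd \emph{set-valued} maximal monotone mappings really do produce odd \emph{single-valued} resolvents, and that oddness then propagates correctly through the composition with the bounded linear operators $A, A^{\ast}$ and through the affine combination inside $S$. The author's preceding computation handles the first half of this, and linearity handles the second, so the remainder of the argument is a clean concatenation of Theorem \ref{Theorem:browder} and Theorem \ref{Theorem:bbr}.
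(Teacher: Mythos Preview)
Your proposal is correct and follows essentially the same route as the paper: establish that $S$ is averaged (the paper cites the proof of \cite[Theorem 3.1]{Moudafi} for this, you reconstruct it via the $(1/L)$-ism property of $A^{\ast}(I-J_{\lambda}^{F_{1}})A$), deduce asymptotic regularity from Theorem \ref{Theorem:browder}, check oddness of $S$ from the oddness of the resolvents and the linearity of $A,A^{\ast}$, and then invoke Theorem \ref{Theorem:bbr}. Your added remark that oddness forces $S(0)=0$, so that $\operatorname{Fix}(S)\neq\emptyset$ and hence $\Gamma\neq\emptyset$ automatically, is a nice observation that the paper leaves implicit (note that the theorem statement, unlike Theorems \ref{Theorem:1} and \ref{Theorem:Halpern}, indeed does not assume $\Gamma\neq\emptyset$).
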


\begin{proof}
The operator $J_{\lambda}^{B_{1}}\left(  I-\gamma A^{\ast}(I-J_{\lambda
}^{F_{1}})A\right)  $ is averaged by the proof of \cite[Theorem 3.1]{Moudafi}.
Therefore, by \cite[Theorem 5]{Browder+Petryshyn} and \cite{Ishikawa} (see
Theorem \ref{Theorem:browder}), the operator $J_{\lambda}^{B_{1}}\left(
I-\gamma A^{\ast}(I-J_{\lambda}^{F_{1}})A\right)  $ is also asymptotically
regular. Since $B_{1}$ and $F_{1}$ are odd, so are their resolvents
$J_{\lambda}^{B_{1}}$ and $J_{\lambda}^{F_{1}},$ and therefore $J_{\lambda
}^{B_{1}}\left(  I-\gamma A^{\ast}(I-J_{\lambda}^{F_{1}})A\right)  $ is odd.
Finally, the strong convergence of Algorithm \ref{alg:CGR} is now seen to
follow from Theorem \ref{Theorem:bbr}.
\end{proof}

For the general SCNPP$(p,r)$ we can again employ a product space formulation
as in Subsection \ref{Subsec: SCNPP-SCFPP} and under the additional oddness
assumption also get strong convergence.

\subsection{A Halpern-type algorithm}

Next, we consider a modification of Algorithm \ref{alg:CGR} inspired by
Halpern's iterative method and prove its strong convergence. Let
$T:C\rightarrow C$ be a nonexpansive operator, where $C$ is a nonempty, closed
and convex subset of a Banach space $\mathcal{B}$. A classical way to study
nonexpansive mappings is to use strict contractions to approximate $T$, i.e.,
for $t\in(0,1)$, we define the strict contraction $T_{t}:C\rightarrow C$ by%
\begin{equation}
T_{t}(x)=tu+(1-t)T(x)\text{ for }x\in C,
\end{equation}
where $u\in C$ is fixed. Banach's Contraction Mapping Principle (see, e.g.,
\cite{Goebel}) guarantees that each $T_{t}$ has a unique fixed point $x_{t}\in
C$. In case $\operatorname*{Fix}(T)\neq\emptyset$, Browder \cite{Browder}
proved that if $\mathcal{B}$ is a Hilbert space, then $x_{t}$ converges
strongly as $t\rightarrow0^{+}$ to the fixed point of $T$ nearest to $u$.
Motivated by Browder's results, Halpern \cite{halpern} proposed an explicit
iterative scheme and proved its strong convergence to a point $z\in
\operatorname*{Fix}(T)$. In the last decades many authors modified Halpern's
iterative scheme and found necessary and sufficient conditions, concerning the
control sequence, that guarantee the strong convergence of Halpern-type
schemes (see, e.g., \cite{lions, Reich, Wittmann, Xu, Suzuki}). Our algorithm
for the SCNPP$(p,r)$ with two set-valued maximal monotone mappings is
presented next.

\begin{algorithm}
\label{alg:CGR-Halpern}$\left.  {}\right.  $

\textbf{Initialization:} Select some $\lambda>0$ and an arbitrary starting
point $x^{0}\in\mathcal{H}_{1}$.

\textbf{Iterative step:} Given the current iterate $x^{k},$ compute%
\begin{equation}
x^{k+1}=\alpha_{k}x^{0}+(1-\alpha_{k})J_{\lambda}^{B_{1}}\left(  I-\gamma
A^{\ast}(I-J_{\lambda}^{F_{1}})A\right)  \left(  x^{k}\right)  ,
\end{equation}
where $\gamma\in(0,2/L)$ with $L=\left\Vert A^{\ast}A\right\Vert $ and the
sequence $\{\alpha_{k}\}_{k=0}^{\infty}\subset\lbrack0,1]$ satisfies
$\lim_{k\rightarrow\infty}\alpha_{k}=0$ and $\sum\limits_{k=0}^{\infty}%
\alpha_{k}=\infty$.
\end{algorithm}

Here is our strong convergence theorem for this algorithm.

\begin{theorem}
\label{Theorem:Halpern}Let $\mathcal{H}_{1}$ and $\mathcal{H}_{2}$ be two real
Hilbert spaces. Let there be given two set-valued maximal monotone mappings
$B_{1}:\mathcal{H}_{1}\rightarrow2^{\mathcal{H}_{1}}$ and $F_{1}%
:\mathcal{H}_{2}\rightarrow2^{\mathcal{H}_{2}},$ and a bounded linear operator
$A:\mathcal{H}_{1}\rightarrow\mathcal{H}_{2}$. If $\Gamma\neq\emptyset$,
$\gamma\in(0,2/L)$ and $\{\alpha_{k}\}_{k=0}^{\infty}\subset\lbrack0,1]$
satisfies $\lim_{k\rightarrow\infty}\alpha_{k}=0$ and $\sum\limits_{k=0}%
^{\infty}\alpha_{k}=\infty$, then any sequence $\left\{  x^{k}\right\}
_{k=0}^{\infty}$ generated by Algorithm \ref{alg:CGR-Halpern} converges
strongly to $x^{\ast}\in\Gamma$.
\end{theorem}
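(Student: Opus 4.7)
The plan is to recognize that Algorithm \ref{alg:CGR-Halpern} is exactly a Halpern-type iteration applied to a single averaged operator on $\mathcal{H}_{1}$, so that Theorem \ref{Th:Halpern} (the Halpern--Suzuki theorem) can be invoked essentially verbatim. Concretely, I would set
\begin{equation}
S:=J_{\lambda}^{B_{1}}\bigl(I-\gamma A^{\ast}(I-J_{\lambda}^{F_{1}})A\bigr),
\end{equation}
so that the iterative step reads $x^{k+1}=\alpha_{k}x^{0}+(1-\alpha_{k})S(x^{k})$, matching the recursion in Theorem \ref{Th:Halpern} with $D=\mathcal{H}_{1}$.

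The first step is to verify that $S$ is averaged. This is precisely what was used (citing the proof of \cite[Theorem 3.1]{Moudafi}) in the proof of Theorem \ref{Theorem:Strong*}: since $F_{1}$ is maximal monotone, $J_{\lambda}^{F_{1}}$ is firmly nonexpansive (Remark \ref{remark:resolvent}), so $I-J_{\lambda}^{F_{1}}$ is also firmly nonexpansive (Remark \ref{remark:av}(iii)), from which one deduces that $A^{\ast}(I-J_{\lambda}^{F_{1}})A$ is $(1/L)$-ism with $L=\|A^{\ast}A\|$. Consequently, for $\gamma\in(0,2/L)$, Remark \ref{remark:resolvent-2} (with $h=A^{\ast}(I-J_{\lambda}^{F_{1}})A$ and $\alpha=1/L$) shows that $S$ is averaged as a composition of the firmly nonexpansive $J_{\lambda}^{B_{1}}$ with an averaged operator (Remark \ref{remark:av}(iv)).

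The second step is to identify $\operatorname{Fix}(S)$ with $\Gamma$, which was already established in the derivation leading to Theorem \ref{Theorem:1}: by Remark \ref{remark:resolvent}(iii), a point $x^{\ast}$ lies in $\Gamma$ iff $x^{\ast}\in\operatorname{Fix}(J_{\lambda}^{B_{1}})$ and $A(x^{\ast})\in\operatorname{Fix}(J_{\lambda}^{F_{1}})$, and a standard computation using $\gamma\in(0,2/L)$ shows that this is equivalent to $x^{\ast}\in\operatorname{Fix}(S)$. Since $\Gamma\neq\emptyset$ by hypothesis, $\operatorname{Fix}(S)\neq\emptyset$.

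With these two facts in hand, the conclusion is immediate from Theorem \ref{Th:Halpern}: the averaged self-mapping $S$ and the sequence $\{\alpha_{k}\}$ satisfying $\lim\alpha_{k}=0$ and $\sum\alpha_{k}=\infty$ force $\{x^{k}\}$ to converge strongly to some $x^{\ast}\in\operatorname{Fix}(S)=\Gamma$. I do not expect a genuine obstacle here; the only slightly subtle point is ensuring that the characterization $\operatorname{Fix}(S)=\Gamma$ really requires $\gamma\in(0,2/L)$ (so that fixed points of $S$ correspond, via the averaging argument, exactly to common zeros), which is handled as in the proof of Theorem \ref{Theorem:1}. No strengthening of the hypotheses (such as the oddness required in Theorem \ref{Theorem:Strong*}) is needed, because the Halpern anchor term $\alpha_{k}x^{0}$ supplies the strong convergence on its own.
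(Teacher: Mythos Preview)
Your proposal is correct and follows essentially the same route as the paper's own proof: define $S=J_{\lambda}^{B_{1}}(I-\gamma A^{\ast}(I-J_{\lambda}^{F_{1}})A)$, note (via \cite[Theorem 3.1]{Moudafi}) that $S$ is averaged, apply Theorem~\ref{Th:Halpern} to get strong convergence to a point of $\operatorname{Fix}(S)$, and then identify $\operatorname{Fix}(S)$ with $\Gamma$. You even spell out a few of the intermediate steps (the $(1/L)$-ism property of $A^{\ast}(I-J_{\lambda}^{F_{1}})A$ and the composition rule for averaged maps) that the paper merely summarizes with ``as we already know'' and a reference to Moudafi's proof.
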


\begin{proof}
As we already know, the operator $J_{\lambda}^{B_{1}}\left(  I-\gamma A^{\ast
}(I-J_{\lambda}^{F_{1}})A\right)  $ is averaged. So, according to Theorem
\ref{Th:Halpern}, any sequence $\left\{  x^{k}\right\}  _{k=0}^{\infty}$
generated by Algorithm \ref{alg:CGR-Halpern} converges strongly to a point in
the fixed point set of the operator, i.e., $x^{\ast}\in\operatorname*{Fix}%
\left(  J_{\lambda}^{B_{1}}\left(  I-\gamma A^{\ast}(I-J_{\lambda}^{F_{1}%
})A\right)  \right)  $ as long as this set is nonempty. As in the proof of
\cite[Theorem 3.1]{Moudafi}, we conclude that $x^{\ast}\in\Gamma,$ as claimed.
\end{proof}

\subsection{An \textbf{Haugazeau-type algorithm}}

Haugazeau \cite{Haugazeau68} presented an algorithm for solving the
\textit{Best Approximation Problem} (BAP) of finding the projection of a point
onto the intersection of $m$ closed convex subsets\textbf{ }$\{C_{i}%
\}_{i=1}^{m}\subset\mathcal{H}$ of a real Hilbert space. Defining for any pair
$x,y\in\mathcal{H}$\textbf{ }the set%
\begin{equation}
H(x,y):=\{u\in\mathcal{H}\mid\left\langle u-y,x-y\right\rangle \leq0\},
\end{equation}
and denoting by $T(x,y,z)$ the projection of $x$\ onto $H(x,y)\cap H(y,z)$,
namely, $T(x,y,z)=P_{H(x,y)\cap H(y,z)}(x),$ Haugazeau showed that for an
arbitrary starting point\textbf{ }$x^{0}\in\mathcal{H}$, any sequence
$\{x^{k}\}_{k=0}^{\infty},$ generated by the iterative step%
\begin{equation}
x^{k+1}=T(x^{0},x^{k},P_{k(\operatorname*{mod}m)+1}(x^{k}))
\end{equation}
converges strongly to the projection of $x^{0}$ onto $C=\cap_{i=1}^{m}C_{i}$.
The operator $T$ requires projecting onto the intersection of two
constructible half-spaces; this is not difficult to implement. In
\cite{Haugazeau68} Haugazeau introduced the operator $T$ as an explicit
description of the projector onto the intersection of the two half-spaces
$H(x,y)$ and $H(y,z)$. So, following, e.g., \cite[Definition 3.1]{bcl06}, and
denoting $\pi=\left\langle x-y,y-z\right\rangle ,$ $\mu=\Vert x-y\Vert^{2},$
$\nu=\Vert y-z\Vert^{2}$ and $\rho=$ $\mu\nu-\pi^{2}$, we have%
\begin{equation}
T(x,y,z)=\left\{
\begin{array}
[c]{ll}%
z, & \text{if }\rho=0\text{ and }\pi\geq0,\medskip\\
x+\left(  1+\frac{\pi}{\nu}\right)  (z-y), & \text{if }\rho>0\text{ and }%
\pi\nu\geq\rho,\medskip\\
y+\frac{\nu}{\rho}(\pi(x-y)+\mu(z-y)), & \text{if }\rho>0\text{ and }\pi
\nu<\rho.\medskip
\end{array}
\right.  \label{eq:Q(x,y,z)}%
\end{equation}
We already know that the operator $S:=J_{\lambda}^{B_{1}}\left(  I-\gamma
A^{\ast}(I-J_{\lambda}^{F_{1}})A\right)  $ is averaged and therefore
nonexpansive. Now consider the firmly nonexpansive operator $S_{1/2}:=\left(
I+S\right)  /2$, which according to Theorem \ref{Theorem:browder} has the same
fixed points as $S$. Following the \textquotedblleft weak-to-strong
convergence principle\textquotedblright\ \cite{BC01}, strong convergence
(without additional assumptions) can be obtained by replacing the updating
rule (\ref{eq:cgr-iterate})\ in Algorithm \ref{alg:CGR} with%
\begin{align}
x^{k+1}  &  =T\left(  x^{0},x^{k},S_{1/2}\left(  x^{k}\right)  \right)
\nonumber\\
&  =P_{H\left(  x^{0},x^{k}\right)  \cap H\left(  x^{k},S_{1/2}\left(
x^{k}\right)  \right)  }(x^{0}).
\end{align}
A similar technique can also be applied to the forward-backward splitting
method in \cite[Section 6]{Combettes04}.

\section{Further comments\label{sec:comments}}

\begin{enumerate}
\item Since the SCNPP$(p,r)$ generalizes the SVIP, it includes all the
applications to which SVIP applies (see \cite[Section 7]{cgr}). In particular,
it includes the Split Feasibility Problem (SFP) and the Convex Feasibility
Problem\textit{ }(CFP). Since the Common Solutions to Variational Inequalities
Problem (CSVIP) \cite{cgrs} with operators is a special case of the SVIP, the
SCNPP$(p,r)$ includes its applications as well. In addition, since all the
applications of the SMVI presented in \cite{Moudafi} are for $f=g=0$ in
(\ref{eq:mvi-1}) and (\ref{eq:mvi-2}) above, it follows that these
applications are also covered by our SCNPP$(p,r)$. They include the Split
Minimization Problem (SMP), which has already been presented in
\cite[Subsection 7.3]{cgr} with continuously differentiable convex functions,
for which we can now drop this assumption, the Split Saddle-Point Problem
(SSPP), the Split Minimax Problem (SMMP) and the Split Equilibrium Problem
(SEP). Observe that if $\mathcal{H}_{1}=\mathcal{H}_{2}$ and $A_{j}=I$ for for
all $j=1,2,\ldots,r,$ then we can deal with all of the above applications with
\textquotedblleft Split\textquotedblright\ replaced by \textquotedblleft
Common\textquotedblright.\ We can even study mixtures of \textquotedblleft
split\textquotedblright\ and \textquotedblleft common\textquotedblright\ applications.

\item According to Remark \ref{remark:resolvent-2}, the operator $J_{\lambda
}^{B}(I-\lambda f)$ is averaged, where $B:\mathcal{H}\rightarrow
2^{\mathcal{H}}$ is maximal monotone, the operator $f:\mathcal{H\rightarrow
H}$ is $\alpha$-ism and $\lambda\in(0,2\alpha)$. Since our convergence
theorems rely on the averagedness of the operators involved, we could modify
our algorithms and obtain strong convergence for Moudafi's SMVI
((\ref{eq:mvi-1}) and (\ref{eq:mvi-2}) above). In addition, our algorithms
allow us to solve Moudafi's SMVI with monotone and hemicontinuous operators
$f$ and $g$ (which is a larger class than the class of inverse strongly
monotone operators).

\item Assuming that the set-valued mappings $B_{1}:\mathcal{H}_{1}%
\rightarrow2^{\mathcal{H}_{1}}$ and $B_{2}:\mathcal{H}_{2}\rightarrow
2^{\mathcal{H}_{2}}$ are maximal monotone, and $f:\mathcal{H}_{1}%
\rightarrow\mathcal{H}_{1}$ and $g:\mathcal{H}_{2}\rightarrow\mathcal{H}_{2}$
are ism-operators, Moudafi presented an algorithm that converges weakly to a
solution of the SMVI. By \cite[Theorem 3]{Rockafellar}, the sum of a maximal
monotone mapping and an ism-operator is maximal monotone. Therefore, the SMVI
reduces to our set-valued two-mapping SCNPP$(p,r)$. In addition, we can phrase
the set-valued SVIP for maximal monotone mappings in the following way. Given
two maximal monotone mappings $B_{1}:\mathcal{H}_{1}\rightarrow2^{\mathcal{H}%
_{1}}$ and $B_{2}:\mathcal{H}_{2}\rightarrow2^{\mathcal{H}_{2}}$, a bounded
linear operator $A:\mathcal{H}_{1}\rightarrow\mathcal{H}_{2}$, and nonempty,
closed and convex subsets $C\subset\mathcal{H}_{1}$ and $Q\subset
\mathcal{H}_{2}$, the set-valued SVIP is formulated as follows:%
\begin{gather}
\text{find a point }x^{\ast}\in C\text{ and a point }u^{\ast}\in B_{1}%
(x^{\ast})\nonumber\\
\text{such that }\left\langle u^{\ast},x-x^{\ast}\right\rangle \geq0\text{ for
all }x\in C,\nonumber\\
\text{and such that}\nonumber\\
\text{the points }y^{\ast}=A\left(  x^{\ast}\right)  \in Q\text{ and }v^{\ast
}\in B_{2}(y^{\ast})\nonumber\\
\text{solve }\left\langle v^{\ast},y-y^{\ast}\right\rangle \geq0\text{ for all
}y\in Q.
\end{gather}
It is clear that if the zeros of the set-valued mappings $B_{1}$ and $B_{2}$
are in $C$ and $Q$, respectively, then they are solutions of the set-valued
SVIP, but in general not all solutions are zeros.
\end{enumerate}

\bigskip

\textbf{Acknowledgments}

This work was partially supported by United States-Israel Binational Science
Foundation (BSF) Grant number 200912, US Department of Army Award number
W81XWH-10-1-0170, Israel Science Foundation (ISF) Grant number 647/07, the
Fund for the Promotion of Research at the Technion, and by the Technion VPR
Fund.\bigskip

\end{document}